\numberwithin{equation}{section}
\newtheoremstyle{thmstyle}
  {6pt}
  {6pt}
  {\it}
  {}
  {\bf}
  {}
  {.5em}
  {}
\newtheoremstyle{remstyle}
  {6pt}
  {6pt}
  {\rm}
  {}
  {\bf}
  {}
  {.5em}
  {}
\def\Section#1{\Sec{\large #1} \setcounter{equation}{0} \vskip -6mm \indent}
\def\Sec{\@Startsection{section}{1}{\z@}
                                   {-3.5ex \@plus -1ex \@minus -.2ex}%
                                   {2.3ex \@plus.2ex}%
                                   {\normalfont\large\bfseries\boldmath}}
\def\@Startsection#1#2#3#4#5#6{%
  \if@noskipsec \leavevmode \fi
  \par
  \@tempskipa #4\relax
  \@afterindenttrue
  \ifdim \@tempskipa <\z@
    \@tempskipa -\@tempskipa \@afterindentfalse
  \fi
  \if@nobreak
    \everypar{}%
  \else
    \addpenalty\@secpenalty\addvspace\@tempskipa
  \fi
  \@ifstar
    {\@ssect{#3}{#4}{#5}{#6}}%
    {\@dblarg{\@Sect{#1}{#2}{#3}{#4}{#5}{#6}}}}
\def\@Sect#1#2#3#4#5#6[#7]#8{%
  \ifnum #2>\c@secnumdepth
    \let\@svsec\@empty
  \else
    \refstepcounter{#1}%
    \protected@edef\@svsec{\@seccntformat{#1}\relax}%
  \fi
  \@tempskipa #5\relax
  \ifdim \@tempskipa>\z@
    \begingroup
      #6{%
          \@hangfrom{\hskip #3\relax\@svsec \hskip -2.5mm}%
          \interlinepenalty \@M #8\@@par}
    \endgroup
    \csname #1mark\endcsname{#7}%
    \addcontentsline{toc}{#1}{%
      \ifnum #2>\c@secnumdepth \else
        \protect\numberline{\csname the#1\endcsname}%
      \fi
      #7}%
  \else
    \def\@svsechd{%
      #6{\hskip #3\relax
      \@svsec #8}%
      \csname #1mark\endcsname{#7}%
      \addcontentsline{toc}{#1}{%
        \ifnum #2>\c@secnumdepth \else
          \protect\numberline{\csname the#1\endcsname}%
        \fi
        #7}}%
  \fi
  \@xsect{#5}}
\def\Subsection#1{\Subsec{#1} \vskip -6mm \indent}
\def\Subsec{\@StartSubsection{subsection}{2}{\z@}%
                                     {-3.25ex\@plus -1ex \@minus -.2ex}%
                                     {1.5ex \@plus .2ex}%
                                     {\normalfont\normalsize\bfseries\boldmath}}
\def\@StartSubsection#1#2#3#4#5#6{%
  \if@noskipsec \leavevmode \fi
  \par
  \@tempskipa #4\relax
  \@afterindenttrue
  \ifdim \@tempskipa <\z@
    \@tempskipa -\@tempskipa \@afterindentfalse
  \fi
  \if@nobreak
    \everypar{}%
  \else
    \addpenalty\@secpenalty\addvspace\@tempskipa
  \fi
  \@ifstar
    {\@ssect{#3}{#4}{#5}{#6}}%
    {\@dblarg{\@SubSect{#1}{#2}{#3}{#4}{#5}{#6}}}}
\def\@SubSect#1#2#3#4#5#6[#7]#8{%
  \ifnum #2>\c@secnumdepth
    \let\@svsec\@empty
  \else
    \refstepcounter{#1}%
    \protected@edef\@svsec{\@seccntformat{#1}\relax}%
  \fi
  \@tempskipa #5\relax
  \ifdim \@tempskipa>\z@
    \begingroup
      #6{%
          \@hangfrom{\hskip #3\relax\@svsec\hskip -1.5mm}%
          \interlinepenalty \@M #8\@@par}
    \endgroup
    \csname #1mark\endcsname{#7}%
    \addcontentsline{toc}{#1}{%
      \ifnum #2>\c@secnumdepth \else
        \protect\numberline{\csname the#1\endcsname}%
      \fi
      #7}%
  \else
    \def\@svsechd{%
      #6{\hskip #3\relax
      \@svsec #8}%
      \csname #1mark\endcsname{#7}%
      \addcontentsline{toc}{#1}{%
        \ifnum #2>\c@secnumdepth \else
          \protect\numberline{\csname the#1\endcsname}%
        \fi
        #7}}%
  \fi
  \@xsect{#5}}
\theoremstyle{thmstyle}
\newtheorem{thm}{\indent Theorem}[section]
\newtheorem{lem}[thm]{\indent Lemma}
\newtheorem{defi}[thm]{\indent Definition}
\newtheorem{prob}[thm]{\indent Problem}
\theoremstyle{remstyle}
\newtheorem{rem}[thm]{\indent Remark}
\newtheorem{algo}[thm]{\indent Algorithm}
\newtheorem{ex}[thm]{\indent Example}
\def\thebibliography#1{\section*{References}
\list{[\arabic{enumi}]} {\settowidth \labelwidth{[#1]} \leftmargin
\labelwidth \advance \leftmargin \labelsep \usecounter{enumi}}
\def\newblock{\hskip .11em plus .33em minus .07em} \small \sloppy \clubpenalty
4000 \widowpenalty 4000 \sfcode`\.=1000 \relax}
\def\BR{\mathbb R}
\def\rd{\mathrm d}
\def\T{\mathrm T}
\def\diam{\mathrm{diam}}
\def\supp{\mathrm{supp}}
\def\true{\mathrm{true}}
\def\Ga{\Gamma}
\def\Om{\Omega}
\def\al{\alpha}
\def\be{\beta}
\def\ga{\gamma}
\def\de{\delta}
\def\ep{\epsilon}
\def\ve{\varepsilon}
\def\te{\theta}
\def\ze{\zeta}
\def\ka{\kappa}
\def\om{\omega}
\def\f{\frac}
\def\nb{\nabla}
\def\pa{\partial}
\def\wh{\widehat}
\def\wt{\widetilde}
\def\tri{\triangle}
\title{{\Large\bf Numerical schemes for reconstructing profiles of moving sources in (time-fractional) evolution equations}}
\author{{\bf Yikan Liu}\\[2mm]
{\normalsize Research Institute for Electronic Science, Hokkaido University}}
\date{}
\begin{document}

\maketitle

\begin{abstract}
This article is concerned with the derivation of numerical reconstruction schemes for the inverse moving source problem on determining source profiles in (time-fractional) evolution equations. As a continuation of the theoretical result on the uniqueness, we adopt a minimization procedure with regularization to construct iterative thresholding schemes for the reduced backward problems on recovering one or two unknown initial value(s). Moreover, an elliptic approach is proposed to solve a convection equation in the case of two profiles.

\vskip 4.5mm

\noindent\begin{tabular}{@{}l@{ }p{10cm}}
{\bf Keywords } & Inverse moving source problem, (Time-fractional) evolution equation, Iterative thresholding scheme, Convection equation
\end{tabular}

\end{abstract}

\baselineskip 14pt
\setlength{\parindent}{1.5em}
\setcounter{section}{0}

\Section{Introduction}

Let $0<\al\le2$, $T>0$ be constants and $\Om\subset\BR^d$ ($d=1,2,\ldots$) be a bounded domain with a smooth boundary $\pa\Om$. In this article, we consider the following initial-boundary value problem for a (time-fractional) evolution equation
\begin{equation}\label{eq-IBVP-u}
\begin{cases}
(\pa_{0+}^\al-\tri)u=F & \mbox{in }\Om\times(0,T),\\
\pa_t^k u=0\ (k=0,\lceil\al\rceil-1) & \mbox{in }\Om\times\{0\},\\
u=0 & \mbox{on }\pa\Om\times(0,T),
\end{cases}
\end{equation}
where the source term $F$ takes the form
\begin{equation}\label{eq-def-F}
F(\bm x,t):=\begin{cases}
f(\bm x-\bm p t), & 0<\al\le1,\\
f(\bm x-\bm p t)+g(\bm x-\bm q t), & 1<\al\le2.
\end{cases}
\end{equation}
In \eqref{eq-IBVP-u}, by $\tri:=\sum_{j=1}^d\f{\pa^2}{\pa x_j^2}$ we denote the usual Laplacian in space, and $\lceil\,\cdot\,\rceil$ is the ceiling function. The notation $\pa_{0+}^\al$ stands for the forward Caputo derivative in time of order $\alpha$, which will be defined in Section \ref{sec-prelim}. In \eqref{eq-def-F}, we assume that $\bm p,\bm q\in\BR^d$ are constant vectors, and assumptions on the space-dependent functions $f,g$ will be specified also in Section \ref{sec-prelim}.

As a continuation of the theoretical counterpart studied in \cite{LHY20}, this article is concerned with the establishment of numerical reconstruction schemes for the following inverse moving source problem regarding \eqref{eq-IBVP-u}--\eqref{eq-def-F}.

\begin{prob}[Inverse moving source problem]\label{prob-IMSP}
Let $u$ be the solution to {\rm\eqref{eq-IBVP-u}--\eqref{eq-def-F},} and $\om\subset\Om$ be a suitably chosen nonempty subdomain of $\Om$. Given constant vectors $\bm p,\bm q\in\BR^d$ such that $\bm p\ne\bm q,$ determine one source profile $f$ in the case of $0<\al\le1$ or two source profiles $f,g$ in the case of $1<\al\le2$ by the partial interior observation of $u$ in $\om\times(0,T)$.
\end{prob}

Except for the representative evolution equations of parabolic and hyperbolic types, the governing equation in \eqref{eq-IBVP-u} is called a time-fractional diffusion equation if $0<\al<1$ and a time-fractional diffusion-wave one if $1<\al<2$. In recent decades, time-fractional evolution equations have gathered increasing attention among applied mathematicians and, especially, general linear theories for \eqref{eq-IBVP-u} with $0<\al<1$ have been mostly established within the last decade (see, e.g., \cite{SY11a,GLY15,KY18}). Meanwhile, the corresponding numerical methods and inverse problems have also been studied intensively, and we refer e.g.\! to \cite{LX07,JLZ13} and \cite{JR15,LiuLiY19,LiLiuY19,LiY19} with the references therein, respectively. Compared with the case of $0<\al<1$, initial-boundary value problems such as \eqref{eq-IBVP-u} with $1<\al<2$ have not been well investigated from both theoretical and numerical aspects. On this direction, we refer to \cite{SY11a} for the well-posedness of forward problems, and \cite{LW19} as a recent progress on numerical approaches to related inverse problems.

Recently, it has been recognized that (time-fractional) evolution equations with orders $0<\al<2$ actually share several common properties, which can be applied to the qualitative analysis of some inverse problems. Besides the time-analyticity asserted in \cite{SY11a}, the weak vanishing property was established in \cite{JLLY17,LHY20} for $0<\al<1$ and $1<\al<2$ respectively, which resembles the classical unique continuation principle for parabolic equations. As direct applications, this property was employed to prove the uniqueness of an inverse $\bm x$-source problem in \cite{JLLY17} and that of Problem \ref{prob-IMSP} in \cite{LHY20}. On the other hand, since hyperbolic equations exhibit essentially different properties from the cases of $0<\al<2$, the uniqueness for Problem \ref{prob-IMSP} with the exceptional case of $\al=2$ requires special treatments and extra assumptions. The interested readers are referred to \cite{LHY20} for theoretical details of Problem \ref{prob-IMSP}, whose main result will be recalled in Section \ref{sec-prelim}.

By a moving source we refer to an inhomogeneous term in \eqref{eq-IBVP-u} basically taking the form of $h(\bm x-\bm\rho(t))$, where $h:\Om\longrightarrow\BR$ and $\bm\rho:[0,T]\longrightarrow\BR^d$ stand for the profile and the orbit of a moving object, respectively. In contrast to conventional inverse source problems, literature on inverse moving source problems seems rather limited, among which the majority focused on hyperbolic equations due to practical significance. We refer e.g.\! to \cite{NIO12,O19} for the reconstruction of moving point sources in wave equations, and \cite{HKLZ19} for the unique determination of profiles or orbits in Maxwell equations. Very recently, the stability for identifying orbits was obtained in \cite{HLY20} for (time-fractional) evolution equations such as \eqref{eq-IBVP-u} with $0<\al\le2$.

Parallel to \cite{HLY20}, the uniqueness for Problem \ref{prob-IMSP} was newly verified in \cite{LHY20}. Aiming at  possible real-world applications, this article is concerned with the development of efficient reconstruction schemes for Problem \ref{prob-IMSP}, which consists of three key ingredients. The first one coincides with that of \cite{LHY20}, that is, reducing the original problem to a backward problem in which the unknown profiles appear as initial conditions. The second one follows the same line as that in \cite{LJY15,JLY17a,JLY17b,JLLY17}, which utilized the iterative thresholding algorithm (see \cite{DDD04}) to deal with the corresponding minimization problem. Finally, for the simultaneous recovery of two profiles when $1<\al\le2$, the last step is to solve a convection equation, which is reduced to solving a series of second order ordinary differential equations.

The remainder of this article is organized as follows. In Section \ref{sec-prelim}, we prepare necessary tools of fractional calculus and recall theoretical facts regarding \eqref{eq-IBVP-u}--\eqref{eq-def-F} and Problem \ref{prob-IMSP}. The derivations of reconstruction schemes for one and two unknown profiles are explained in Sections \ref{sec-scheme1} and \ref{sec-scheme2}, respectively, and conclusions with future topics are given in Section \ref{sec-rem}.

\Section{Preliminaries and revisit of theoretical results}\label{sec-prelim}

We start with fixing notations and terminologies. Throughout this article, all vectors are by default column vectors, e.g., $\bm x=(x_1,\ldots,x_d)^\T\in\BR^d$, where $(\,\cdot\,)^\T$ denotes the transpose. The inner product in $\BR^d$ is denoted by $\bm x\cdot\bm y$, and the Euclidean distance $|\cdot|$ is induced as $|\bm x|=(\bm x\cdot\bm x)^{1/2}$. By $L^2(\Om)$ we denote the space of square integrable functions in $\Om$, and let $H^k(\Om)$, $H_0^k(\Om)$, etc.\! and $W^{k,\ga}(0,T;H_0^1(\Om))$, etc.\! ($k=1,2,\ldots$, $\ga\in[1,\infty]$) be (vector-valued) Sobolev spaces (see e.g. \cite{A75,E98}).

For the definition of $\pa_{0+}^\al$ in \eqref{eq-IBVP-u}, we recall the forward Riemann-Liouville integral operator for $\be\in[0,1]$:
\[
J_{0+}^\be h(t):=\left\{\!\begin{alignedat}{2}
& h(t), & \quad & \be=0,\\
& \f1{\Ga(\be)}\int_0^t\f{h(\tau)}{(t-\tau)^{1-\be}}\,\rd\tau, & \quad & 0<\be\le1,
\end{alignedat}
\right.\quad h\in C[0,\infty),
\]
where $\Ga(\,\cdot\,)$ is the Gamma function. Then for $\be>0$, the forward Caputo derivative $\pa_{0+}^\be$ and the forward Riemann-Liouville derivative $D_{0+}^\be$ are formally defined as
\[
\pa_{0+}^\be=J_{0+}^{\lceil\be\rceil-\be}\circ\f{\rd^{\lceil\be\rceil}}{\rd t^{\lceil\be\rceil}},\quad D_{0+}^\be=\f{\rd^{\lceil\be\rceil}}{\rd t^{\lceil\be\rceil}}\circ J_{0+}^{\lceil\be\rceil-\be},
\]
where $\circ$ denotes the composition. Meanwhile, we also introduce the backward Riemann-Liouville integral operator as
\[
J_{T-}^\be h(t):=\left\{\!\begin{alignedat}{2}
& h(t), & \quad & \be=0,\\
& \f1{\Ga(\be)}\int_t^T\f{h(\tau)}{(\tau-t)^{1-\be}}\,\rd\tau, & \quad & 0<\be\le1,
\end{alignedat}\right.\quad h\in C(-\infty,T],
\]
and we further define the backward Caputo and Riemann-Liouville derivatives for $\be>0$ as
\[
\pa_{T-}^\be=J_{T-}^{\lceil\be\rceil-\be}\circ\f{\rd^{\lceil\be\rceil}}{\rd t^{\lceil\be\rceil}},\quad D_{T-}^\be=\f{\rd^{\lceil\be\rceil}}{\rd t^{\lceil\be\rceil}}\circ J_{T-}^{\lceil\be\rceil-\be}.
\]
For later use, we derive the useful fractional version of integration by parts.

\begin{lem}\label{lem-frac-int}
Let $h_1,h_2\in C^{\lceil\al\rceil}[0,T]$. If $0<\al\le1,$ then
\begin{equation}\label{eq-frac-int1}
\int_0^T(\pa_{0+}^\al h_1)\,h_2\,\rd t+h_1(0)(J_{T-}^{1-\al}h_2)(0)=-\int_0^T h_1\,(D_{T-}^\al h_2)\,\rd t+h_1(T)(J_{T-}^{1-\al}h_2)(T).
\end{equation}
If $1<\al\le2,$ then
\begin{align}
& \int_0^T(\pa_{0+}^\al h_1)\,h_2\,\rd t+h_1'(0)(J_{T-}^{2-\al}h_2)(0)=-\int_0^T h_1'\,(D_{T-}^{\al-1}h_2)\,\rd t+h_1'(T)(J_{T-}^{2-\al}h_2)(T),\label{eq-frac-int2a}\\
& \int_0^T h_1'\,(D_{T-}^{\al-1}h_2)\,\rd t+h_1(0)(D_{T-}^{\al-1}h_2)(0)=-\int_0^T h_1\,(D_{T-}^\al h_2)\,\rd t+h_1(T)(D_{T-}^{\al-1}h_2)(T).\label{eq-frac-int2b}
\end{align}
\end{lem}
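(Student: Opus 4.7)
The overall strategy is to reduce each identity to the classical integration-by-parts formula on $[0,T]$ by first moving the Riemann--Liouville integral from one factor to the other via a Fubini-type swap, and then interpreting the resulting compositions as backward Riemann--Liouville derivatives. The key auxiliary identity, valid for any $\be\in(0,1]$ and $h,g\in C[0,T]$, is
\[
\int_0^T(J_{0+}^\be h)(t)\,g(t)\,\rd t=\f1{\Ga(\be)}\int_0^T\!\!\int_0^t\f{h(\tau)\,g(t)}{(t-\tau)^{1-\be}}\,\rd\tau\,\rd t=\int_0^T h(\tau)\,(J_{T-}^\be g)(\tau)\,\rd\tau,
\]
the middle step being Fubini's theorem, whose applicability is immediate from the integrability of the weakly singular kernel on the triangle $\{0\le\tau\le t\le T\}$ combined with the continuity of $h,g$.

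For $0<\al\le1$, write $\pa_{0+}^\al h_1=J_{0+}^{1-\al}h_1'$ from the definition. The Fubini identity above with $\be=1-\al$, $h=h_1'$, $g=h_2$ yields
\[
\int_0^T(\pa_{0+}^\al h_1)\,h_2\,\rd t=\int_0^T h_1'(t)\,(J_{T-}^{1-\al}h_2)(t)\,\rd t.
\]
Applying the classical integration by parts on the right-hand side then gives the boundary contributions $h_1(T)(J_{T-}^{1-\al}h_2)(T)-h_1(0)(J_{T-}^{1-\al}h_2)(0)$ plus $-\int_0^T h_1\,\f{\rd}{\rd t}(J_{T-}^{1-\al}h_2)\,\rd t$. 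Since $\lceil\al\rceil=1$, the definition of $D_{T-}^\al$ identifies $\f{\rd}{\rd t}(J_{T-}^{1-\al}h_2)$ precisely with $D_{T-}^\al h_2$, and rearrangement yields \eqref{eq-frac-int1}.

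For $1<\al\le2$, we have $\lceil\al\rceil=2$ and $\pa_{0+}^\al h_1=J_{0+}^{2-\al}h_1''$. The same Fubini swap with $\be=2-\al$ produces $\int_0^T h_1''(t)\,(J_{T-}^{2-\al}h_2)(t)\,\rd t$, and one round of classical integration by parts transfers a derivative from $h_1''$ to $J_{T-}^{2-\al}h_2$. Because $D_{T-}^{\al-1}=\f{\rd}{\rd t}\circ J_{T-}^{2-\al}$ (noting $\lceil\al-1\rceil=1$), the new integrand on $h_2$ is exactly $D_{T-}^{\al-1}h_2$, which after rearranging the boundary terms at $t=0,T$ gives \eqref{eq-frac-int2a}. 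Identity \eqref{eq-frac-int2b} follows from a second classical integration by parts in $\int_0^T h_1'\,(D_{T-}^{\al-1}h_2)\,\rd t$, using that $\f{\rd}{\rd t}(D_{T-}^{\al-1}h_2)=\f{\rd^2}{\rd t^2}(J_{T-}^{2-\al}h_2)=D_{T-}^\al h_2$ by the definition of $D_{T-}^\al$.

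No serious obstacle is anticipated: the only subtle point is the Fubini exchange with the weakly singular kernel, which is harmless because $h_1,h_1',h_1''$ and $h_2$ are continuous on $[0,T]$. The proof is essentially a careful unfolding of the fractional-operator definitions combined with classical integration by parts and relabeling of boundary terms.
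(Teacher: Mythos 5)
Your proposal is correct and follows essentially the same route as the paper: both establish the adjoint identity $\int_0^T(J_{0+}^\be h_1)\,h_2\,\rd t=\int_0^T h_1\,(J_{T-}^\be h_2)\,\rd t$ by a Fubini swap (the paper cites it from \cite{JLLY17}), then apply classical integration by parts and identify $\f{\rd}{\rd t}\circ J_{T-}^{\lceil\al\rceil-\al}$ with the backward Riemann--Liouville derivative. The only difference is that you carry out the $1<\al\le2$ case explicitly while the paper omits it as similar; your unfolding of that case is accurate.
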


\begin{proof}
We recall the following formula connecting the forward and backward Riemann-Liouville integral operators (see \cite[Lemma 4.1]{JLLY17})
\[
\int_0^T(J_{0+}^\be h_1)\,h_2\,\rd t=\int_0^T h_1\,(J_{T-}^\be h_2)\,\rd t,\quad0<\be\le1,\ h_1,h_2\in C[0,T],
\]
which can be easily verified by direct calculation. Then for $0<\al\le1$, we employ the above formula and the usual integration by parts to derive
\begin{align*}
\int_0^T(\pa_{0+}^\al h_1)\,h_2\,\rd t & =\int_0^T(J_{0+}^{1-\al}h_1')\,h_2\,\rd t=\int_0^T h_1'\,(J_{T-}^{1-\al}h_2)\,\rd t\\
& =\left[h_1\,(J_{T-}^{1-\al}h_2)\right]_0^T-\int_0^T h_1\,(J_{T-}^{1-\al}h_2)'\,\rd t\\
& =h_1(T)(J_{T-}^{1-\al}h_2)(T)-h_1(0)(J_{T-}^{1-\al}h_2)(0)-\int_0^T h_1\,(D_{T-}^\al h_2)\,\rd t
\end{align*}
for $h_1,h_2\in C^1[0,T]$, which gives \eqref{eq-frac-int1}. The derivations of \eqref{eq-frac-int2a}--\eqref{eq-frac-int2b} are similar and we omit the proofs here.
\end{proof}

Next, following the same line as \cite{LHY20}, we specify the key assumption on the source term \eqref{eq-def-F} and the observation subdomain $\om$ as
\begin{equation}\label{eq-asp}
\bigcup_{0\le t\le T}\{(\supp\,f+\bm p t)\cup(\supp\,g+\bm q t)\}\subset\subset\Om,\quad f,g\in H_0^{\lceil\al\rceil}(\Om),\quad\pa\om\supset\pa\Om.
\end{equation}
In other words, it is assumed that the source term $F$ defined in \eqref{eq-def-F} is compactly supported in $\Om\times[0,T]$ and is sufficiently smooth. Especially, we can suppose $f,g\in H_0^2(\Om)$ for $1<\al\le2$ since $f,g$ have compact supports in $\Om$. Here for simplicity, $\om$ is also assumed to surround the whole boundary of $\Om$ in the case of $\al=1$. For readers' better understanding, we illustrate the geometrical aspect of \eqref{eq-asp} in Figure \ref{fig-asp}.
\begin{figure}[htbp]\centering
\input{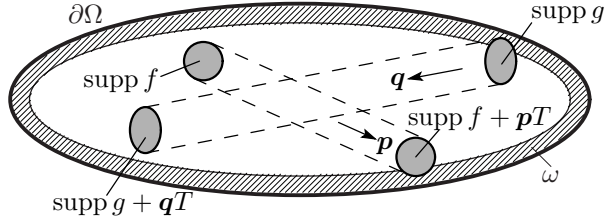}
\caption{An illustration of the geometrical assumptions on the moving source \eqref{eq-def-F} and the subdomain $\om$.}\label{fig-asp}
\end{figure}

Based on the above assumptions, now we state the well-posedness results concerning \eqref{eq-IBVP-u}--\eqref{eq-def-F}, which are slightly modified from \cite[Lemma 2.2]{LHY20} to fit into the framework of Sections \ref{sec-scheme1}--\ref{sec-scheme2}.

\begin{lem}\label{lem-IBVP-u}
Let $u$ be the solution to {\rm\eqref{eq-IBVP-u}--\eqref{eq-def-F},} where the profiles $f,g$ satisfy the key assumption \eqref{eq-asp}. Then the following statements hold true.

{\rm(a)} If $0<\al\le1,$ then $u\in L^\infty(0,T;H^2(\Om)\cap H_0^1(\Om))$ and $\pa_t u\in L^\ga(0,T;H_0^1(\Om)),$ where $\ga\in(1,\f1{1-\al})$ is arbitrarily fixed and we interpret $\f1{1-\al}=\infty$ for $\al=1$.

{\rm(b)} If $1<\al\le2,$ then $u\in L^\infty(0,T;H^3(\Om)\cap H_0^1(\Om)),$ $\pa_t u\in L^\infty(0,T;H^2(\Om)\cap H_0^1(\Om))$ and $\pa_t^2u\in L^\ga(0,T;H_0^1(\Om)),$ where $\ga\in(1,\f1{2-\al})$ is arbitrarily fixed and we interpret $\f1{2-\al}=\infty$ for $\al=2$.
\end{lem}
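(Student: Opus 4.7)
Since the lemma is described as ``slightly modified from \cite[Lemma 2.2]{LHY20},'' my plan is to reduce everything to the standard well-posedness theory for the inhomogeneous time-fractional equation (see e.g.\! \cite{SY11a,KY18}), which, for source terms enjoying suitable time-space regularity, gives the solution as $u(t)=\int_0^t S_\al(t-\tau)F(\tau)\,\rd\tau$ with the Mittag-Leffler solution operator $S_\al(t):=t^{\al-1}E_{\al,\al}(t^\al\tri)$. The only non-trivial input is to convert the special structure $F(\bm x,t)=f(\bm x-\bm p t)(+g(\bm x-\bm q t))$ of a moving source into useful $W^{k,\infty}(0,T;H_0^m(\Om))$-type bounds, after which the claimed regularity of $u$ follows from Duhamel's formula and sharp $H^1_0$-bounds for $S_\al$.

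First, I would exploit the compact support hypothesis \eqref{eq-asp}. Because $\supp f+\bm p t\subset\subset\Om$ for every $t\in[0,T]$, the shifted function $\bm x\mapsto f(\bm x-\bm p t)$ lies in $H_0^{\lceil\al\rceil}(\Om)$ uniformly in $t$. Differentiating the translation in time by the chain rule gives $\pa_t[f(\bm x-\bm p t)]=-\bm p\cdot\nb f(\bm x-\bm p t)$, so that $\pa_t F\in L^\infty(0,T;H_0^{\lceil\al\rceil-1}(\Om))$ and, when $\al>1$, also $\pa_t^2F\in L^\infty(0,T;H_0^{\lceil\al\rceil-2}(\Om))$ by the same argument on $\bm q,g$. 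In particular $F\in L^\infty(0,T;H_0^2(\Om))$ in case (b).

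For case (a), the standard $H^2$-smoothing estimate $\|(-\tri)S_\al\phi\|_{L^2}\lesssim t^{-1}\|\phi\|_{L^2}$ combined with $F\in L^\infty(0,T;L^2(\Om))$ and the integrability of $t^{\al-1}$ yields $u\in L^\infty(0,T;H^2(\Om)\cap H_0^1(\Om))$. To handle $\pa_t u$, I would integrate by parts in time to obtain
\[
\pa_t u(t)=S_\al(t)F(0)+\int_0^t S_\al(t-\tau)\pa_\tau F(\tau)\,\rd\tau,
\]
whose integral part is in $L^\infty(0,T;H_0^1(\Om))$, while the boundary term satisfies $\|S_\al(t)F(0)\|_{H_0^1(\Om)}\lesssim t^{\al-1}\|f\|_{H_0^1}$ (uniformly for $\al=1$), which lies in $L^\ga(0,T;H_0^1(\Om))$ precisely when $\ga(\al-1)>-1$, i.e.\! $\ga<\f1{1-\al}$. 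Case (b) is treated identically with one more derivative: applying the $H^2$-smoothing of $S_\al$ to $F\in L^\infty(0,T;H_0^2(\Om))$ produces $u\in L^\infty(0,T;H^3(\Om)\cap H_0^1(\Om))$; the first time derivative inherits $L^\infty(0,T;H^2\cap H_0^1)$ from $\pa_t F\in L^\infty(0,T;H_0^1(\Om))$; and integration by parts twice produces a boundary term behaving like $t^{\al-2}\|f\|_{H_0^1}$ (plus an analogous contribution from $g$), which is $L^\ga$-integrable iff $\ga<\f1{2-\al}$.

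The main obstacle I anticipate is the careful bookkeeping of the fractional integrations by parts used to justify the above identity for $\pa_t u$ and $\pa_t^2 u$ when $\al\in(1,2)$: one must keep track of which classical derivatives can be pulled inside the convolution with $S_\al$, and the sharpness of the $t^{\al-1}$ and $t^{\al-2}$ blow-ups relies on the two-parameter Mittag-Leffler bounds $\|E_{\al,\al}(t^\al\tri)\phi\|_{H_0^1}\lesssim\|\phi\|_{H_0^1}$ obtained from the Dirichlet spectral decomposition of $-\tri$. Once these are in place, the two-profile case is handled by linearity, and the power-function integrability computations determine the admissible range of $\ga$.
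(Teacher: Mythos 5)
You should first note that the paper does not actually prove this lemma: it imports it from \cite[Lemma 2.2]{LHY20} and dismisses the improved time regularity of $\pa_t^{\lceil\al\rceil}u$ as ``straightforward.'' Your reconstruction via the Duhamel representation $u(t)=\int_0^tS_\al(t-\tau)F(\tau)\,\rd\tau$ with the Mittag--Leffler operator, translating the moving-source structure into $W^{k,\infty}(0,T;H_0^m(\Om))$ bounds on $F$ and then shifting time derivatives onto $F$ by integration by parts, is exactly the standard Sakamoto--Yamamoto machinery that the cited lemma rests on, and the exponent counting for the boundary terms ($t^{\al-1}$ in case (a), $t^{\al-2}$ in case (b)) correctly reproduces the ranges $\ga<\f1{1-\al}$ and $\ga<\f1{2-\al}$.

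There is, however, one step that fails as written: you claim $u\in L^\infty(0,T;H^2(\Om)\cap H_0^1(\Om))$ follows from $F\in L^\infty(0,T;L^2(\Om))$, the smoothing bound $\|\tri S_\al(t)\phi\|_{L^2}\lesssim t^{-1}\|\phi\|_{L^2}$, and ``the integrability of $t^{\al-1}$.'' But that combination gives $\|\tri u(t)\|_{L^2}\le\int_0^t(t-\tau)^{-1}\|F(\tau)\|_{L^2}\,\rd\tau$, and $t^{-1}$ is not integrable at the origin, so nothing is proved. You must instead spend the extra spatial regularity you already established for the source: since $F\in L^\infty(0,T;H_0^1(\Om))$ under \eqref{eq-asp}, the interpolated bound $\|\tri S_\al(t)\phi\|_{L^2}\lesssim t^{\al/2-1}\|\phi\|_{H_0^1}$ has an integrable kernel and yields the claim (equivalently, integrate by parts in time first and use the uniform boundedness of $\la t^\al E_{\al,\al+1}(-\la t^\al)$). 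The same correction propagates to case (b), where $u\in L^\infty(0,T;H^3(\Om))$ should be obtained from $F\in L^\infty(0,T;H_0^2(\Om))$ via $\|\tri S_\al(t)\phi\|_{H^1}\lesssim t^{\al/2-1}\|\phi\|_{H^2}$ rather than from a nonintegrable $t^{-1}$ kernel. With this repair the argument is complete and matches the route the paper implicitly relies on.
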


Compared with \cite[Lemma 2.2]{LHY20}, the time regularity of $\pa_t^{\lceil\al\rceil}u$ is improved in Lemma \ref{lem-IBVP-u}. Such an improvement is straightforward and we omit the detail here.

Finally, we close this section by recalling the uniqueness result for Problem \ref{prob-IMSP} obtained in \cite[Theorem 2.4]{LHY20}. Thanks to the linearity of Problem \ref{prob-IMSP}, it suffices to assume $u=0$ in $\om\times(0,T)$.

\begin{lem}\label{lem-unique}
Let $u$ be the solution to {\rm\eqref{eq-IBVP-u}--\eqref{eq-def-F},} where the profiles $f,g$ and the subdomain $\om$ satisfy the key assumption \eqref{eq-asp}.

{\rm(a)} If $0<\al\le1,$ then $u=0$ in $\om\times(0,T)$ implies $f\equiv0$ in $\Om$.

{\rm(b)} In the case of $1<\al\le2$, we additionally require $T>2\,\diam(\Om)$ if $\al=2$. Then $u=0$ in $\om\times(0,T)$ implies $f=g\equiv0$ in $\Om$.
\end{lem}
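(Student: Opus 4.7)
The plan is to split the argument into two stages. In Stage~I I propagate the vanishing of $u$ from $\om\times(0,T)$ to all of $\Om\times(0,T)$, which via the governing equation forces $F\equiv0$; in Stage~II I peel off the individual profiles from $F\equiv0$ using the translating structure of the source. I expect the main obstacle to lie in Stage~I, where the fractional regime rules out classical Carleman/unique-continuation machinery and the hyperbolic case $\al=2$ must be handled separately.

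For Stage~I, the key assumption \eqref{eq-asp} guarantees that the moving supports of $F$ stay in $\Om\setminus\om$, so in $\om\times(0,T)$ the function $u$ satisfies the homogeneous equation $(\pa_{0+}^\al-\tri)u=0$ and vanishes identically. For $0<\al\le1$ I would invoke the weak vanishing property for time-fractional diffusion equations established in \cite{JLLY17} to conclude $u\equiv0$ throughout $\Om\times(0,T)$; that argument proceeds through eigenfunction expansion in $-\tri$, Mittag--Leffler asymptotics, and the time-analyticity results of \cite{SY11a}. For $1<\al<2$ the analogous diffusion--wave weak vanishing proved in \cite{LHY20} plays the same role, with Lemma \ref{lem-IBVP-u}(b) supplying the required regularity. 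The exceptional case $\al=2$ is hyperbolic and weak vanishing fails; here the condition $T>2\,\diam(\Om)$ is tailored to admit a finite-propagation-speed (or Holmgren-type) argument that still yields $u\equiv0$ in $\Om\times(0,T)$. Substitution back into \eqref{eq-IBVP-u} then delivers $F\equiv0$ in $\Om\times(0,T)$.

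For Stage~II in case~(a), the identity $f(\bm x-\bm p t)\equiv0$ on $\Om\times(0,T)$ together with the continuity of $f\in H_0^1(\Om)$ permits letting $t\to0^+$, so $f\equiv0$ in $\Om$. For case~(b), the compact-support condition in \eqref{eq-asp} allows me to extend $f,g$ by zero to all of $\BR^d$ so that $F\equiv0$ actually holds on $\BR^d\times(0,T)$; the Fourier transform in $\bm x$ then yields
\[
\wh f(\bm\xi)\,\e^{-\ri t\,\bm p\cdot\bm\xi}+\wh g(\bm\xi)\,\e^{-\ri t\,\bm q\cdot\bm\xi}=0,\quad(\bm\xi,t)\in\BR^d\times(0,T).
\]
Differentiating in $t$ at $t=0$ and combining with the undifferentiated identity produces a $2\times2$ linear system for $(\wh f(\bm\xi),\wh g(\bm\xi))$ with determinant $\ri(\bm q-\bm p)\cdot\bm\xi$, which is nonzero on an open dense subset of $\BR^d$ since $\bm p\ne\bm q$. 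Continuity of $\wh f,\wh g$ (both entire by Paley--Wiener) then forces $\wh f\equiv\wh g\equiv0$, hence $f\equiv g\equiv0$ in $\Om$.
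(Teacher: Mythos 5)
There is a genuine gap in your Stage~I, and it is exactly the point the paper (via its companion \cite{LHY20}) is organized to avoid. The weak vanishing property of \cite{JLLY17,LHY20} is \emph{not} a local unique continuation principle: its hypothesis is that the homogeneous equation $(\pa_{0+}^\al-\tri)u=0$ holds in \emph{all} of $\Om\times(0,T)$ together with the homogeneous lateral boundary condition, because its proof runs through the eigenfunction expansion of the solution on the whole domain and Mittag--Leffler/analyticity arguments that have no localized version. Here $u$ satisfies an inhomogeneous equation in $\Om\times(0,T)$ (the source $F$ is supported somewhere in the interior --- indeed that is what one is trying to rule out), so the property cannot be applied to $u$, and knowing the equation is homogeneous only on $\om\times(0,T)$ does not let you conclude $u\equiv0$ throughout $\Om\times(0,T)$. (As a secondary point, even the claim that $F=0$ on $\om\times(0,T)$ is not part of assumption \eqref{eq-asp}, which only requires the translated supports to be compactly contained in $\Om$ and $\pa\om\supset\pa\Om$.) The route actually taken in \cite{LHY20}, and mirrored in Sections \ref{sec-scheme1}--\ref{sec-scheme2} of this paper, is to first apply $J_{0+}^{\lceil\al\rceil-\al}(\pa_t+\bm p\cdot\nb)$ (and additionally $(\pa_t+\bm q\cdot\nb)$ for two profiles) to $u$: since $(\pa_t+\bm p\cdot\nb)f(\bm x-\bm pt)=0$, the resulting auxiliary function $v$ satisfies the \emph{homogeneous} equation in $\Om\times(0,T)$ with the unknown profiles appearing as initial data (cf.\ \eqref{eq-IBVP-v} and \eqref{eq-IBVP-v2}); the hypothesis $u=0$ in $\om\times(0,T)$ together with $\pa\om\supset\pa\Om$ forces $v=0$ in $\om\times(0,T)$ and on $\pa\Om\times(0,T)$, and only then is weak vanishing applicable, yielding $v\equiv0$ and hence the vanishing of the initial values.

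Your Stage~II is sound and essentially equivalent to the paper's algebraic step: from $f+g=0$ and $\bm q\cdot\nb f+\bm p\cdot\nb g=0$ one gets $(\bm q-\bm p)\cdot\nb f=0$, and compact support kills $f$; your Fourier/Paley--Wiener formulation of this is a legitimate variant. But as written the proof does not go through, because the input to Stage~II ($F\equiv0$) is never legitimately obtained. A small further caveat: in case (a), $f\in H_0^1(\Om)$ need not be continuous for $d\ge2$, so ``letting $t\to0^+$'' should be replaced by noting that $f(\cdot-\bm pt)=0$ a.e.\ for a.e.\ small $t$ already gives $f=0$ by translation.
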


\Section{Iterative thresholding scheme for one unknown profile}\label{sec-scheme1}

This section is devoted to the construction of a numerical inversion scheme for Problem \ref{prob-IMSP} on identifying a single profile via a minimization procedure. In the basic formulation \eqref{eq-def-F} and Problem \ref{prob-IMSP}, we assumed two profiles $f,g$ for $1<\al\le2$, which definitely includes the case and the determination of a single profile. Nevertheless, from a numerical viewpoint, the recovery of a single profile for $1<\al\le2$ also deserves consideration at least as a prototype of that of two profiles. Therefore, independent of the theoretical formulation, in this section we will consider the initial-boundary value problem
\begin{equation}\label{eq-IBVP-u1}
\begin{cases}
(\pa_{0+}^\al-\tri)u(\bm x,t)=f(\bm x-\bm p t), & (\bm x,t)\in\Om\times(0,T),\\
\pa_t^k u(\bm x,0)=0\ (k=0,\lceil\al\rceil-1), & \bm x\in\Om,\\
u(\bm x,t)=0, & (\bm x,t)\in\pa\Om\times(0,T),
\end{cases}
\end{equation}
where $0<\al\le2$ and $f\in H_0^1(\Om)$. To emphasize the dependence, we will denote the solution to \eqref{eq-IBVP-u1} by $u(f)$.

Let $f_\true\in H_0^1(\Om)$ be the true profile and $u^\de$ be the observation data. For technical convenience, we assume that there exists a $\ga\in(1,\f1{\lceil\al\rceil-\al})$ such that
\begin{equation}\label{eq-asp-data}
u^\de\in W^{1,\ga}(0,T;H^1(\om)),\quad\|u(f_\true)-u^\de\|_{W^{1,\ga}(0,T;H^1(\om)))}\le\de,
\end{equation}
which means that the noise in the observation data is bounded by $\de$ in $W^{1,\ga}(0,T;H^1(\om))$. Although \eqref{eq-asp-data} looks restrictive for observation data, it is essential for the establishment of the inversion scheme in the sequel. In view of Lemma \ref{lem-IBVP-u}, it turns out that \eqref{eq-asp-data} is tolerable. On the other hand, it suffices to add a preconditioning procedure to mollify the noisy data by some stable numerical differentiation method (see e.g. \cite{WJC02}).

\Subsection{Case of $0<\al\le1$}

First we consider the case of $0<\al\le1$. Similarly to the proof of \cite[Theorem 2.4]{LHY20}, we introduce an auxiliary function $v(f):=J_{0+}^{1-\al}(\pa_t+\bm p\cdot\nb)u(f)$, which satisfies the following initial-boundary value problem for a (time-fractional) diffusion equation with a Caputo derivative in time:
\[
\begin{cases}
(\pa_{0+}^\al-\tri)v=0 & \mbox{in }\Om\times(0,T),\\
v=f & \mbox{in }\Om\times\{0\},\\
v=J_{0+}^{1-\al}(\bm p\cdot\nb u(f)) & \mbox{on }\pa\Om\times(0,T).
\end{cases}
\]
Here the boundary condition of $v(f)$ follows from that of $u(f)$ and the assumption $\pa\om\supset\pa\Om$. Especially, on $\pa\Om\times(0,T)$, we know $v(f_\true)=J_{0+}^{1-\al}(\bm p\cdot\nb u(f_\true))$ is approximated by $J_{0+}^{1-\al}(\bm p\cdot\nb u^\de)$. Then for any $f\in H_0^1(\Om)$, we can freeze the boundary condition of $v(f)$ as $J_{0+}^{1-\al}(\bm p\cdot\nb u^\de)|_{\pa\Om\times(0,T)}$, i.e., $v(f)$ satisfies
\begin{equation}\label{eq-IBVP-v}
\begin{cases}
(\pa_{0+}^\al-\tri)v=0 & \mbox{in }\Om\times(0,T),\\
v=f & \mbox{in }\Om\times\{0\},\\
v=J_{0+}^{1-\al}(\bm p\cdot\nb u^\de) & \mbox{on }\pa\Om\times(0,T).
\end{cases}
\end{equation}
According to Lemma \ref{lem-IBVP-u}(a) and H\"older's inequality, it is readily seen that $v(f)\in L^\infty(0,T;H^1(\Om))$ for any $f\in H_0^1(\Om)$.

Now we set $v^\de:=J_{0+}^{1-\al}(\pa_t+\bm p\cdot\nb)u^\de\in L^\infty(0,T;L^2(\om))$ and consider the following regularized minimization problem with a penalty term
\begin{equation}\label{eq-def-Phi}
\min_{f\in H_0^1(\Om)}\Phi(f):=\left\|v(f)-v^\de\right\|_{L^2(\om\times(0,T))}^2+\ka\|\nb f\|_{L^2(\Om)}^2,
\end{equation}
where $\ka>0$ is the regularization parameter. Here we notice that Poincar\'e inequality guarantees the equivalence of $\|f\|_{H^1(\Om)}$ and $\|\nb f\|_{L^2(\Om)}$. Next, we calculate the Fr\'echet derivative of $\Phi(f)$ for $f\in H_0^1(\Om)$ on the direction $\wt f\in H_0^1(\Om)$. Picking a small $\ve>0$, we calculate
\begin{align*}
\f{\Phi(f+\ve\,\wt f\,)-\Phi(f)}\ve & =\int_0^T\!\!\!\int_\om\f{v(f+\ve\,\wt f\,)-v(f)}\ve\left(v(f+\ve\,\wt f\,)+v(f)-2\,v^\de\right)\rd\bm x\rd t\\
& \quad\,+\ka\int_\Om\nb\wt f\cdot(2\nb f+\ve\nb\wt f\,)\,\rd\bm x.
\end{align*}
Taking difference between the initial-boundary value problems of $v(f+\ve\,\wt f\,)$ and $v(f)$, we see that a further auxiliary function $w(\wt f\,):=(v(f+\ve\,\wt f\,)-v(f))/\ve$ satisfies
\begin{equation}\label{eq-IBVP-w0}
\begin{cases}
(\pa_{0+}^\al-\tri)w=0 & \mbox{in }\Om\times(0,T),\\
w=\wt f & \mbox{in }\Om\times\{0\},\\
w=0 & \mbox{on }\pa\Om\times(0,T).
\end{cases}
\end{equation}
By linearity, we have $w(\wt f\,)\in L^\infty(0,T;H_0^1(\Om))$. Then we deduce
\begin{align}
\f{\Phi'(f)\wt f}2 & =\lim_{\ve\to0}\f{\Phi(f+\ve\,\wt f\,)-\Phi(f)}{2\ve}\nonumber\\
& =\f12\int_0^T\!\!\!\int_\om w(\wt f\,)\left(\lim_{\ve\to0}v(f+\ve\,\wt f\,)+v(f)-2\,v^\de\right)\rd\bm x\rd t+\ka\int_\Om\nb\wt f\cdot\nb f\,\rd\bm x\nonumber\\
& =\int_0^T\!\!\!\int_\om w(\wt f\,)\left(v(f)-v^\de\right)\rd\bm x\rd t-\ka\int_\Om\wt f\,\tri f\,\rd\bm x,\label{eq-Frechet0}
\end{align}
where we observed $v(f+\ve\,\wt f\,)\longrightarrow v(f)$ in $L^1(0,T;L^2(\om))$ as $\ve\to0$ by the continuous dependence of the solution to \eqref{eq-IBVP-v} upon the initial value. Here we understand $\tri f\in H^{-1}(\Om)$ and that the inner product $\int_\Om\wt f\,\tri f\,\rd\bm x$ stands for the duality pairing ${}_{H^{-1}}\langle\tri f,\wt f\,\rangle_{H_0^1}$.

Now it remains to derive the explicit form of $\Phi'(f)$. To this end, we introduce the backward problem
\begin{equation}\label{eq-IBVP-y0}
\begin{cases}
(D_{T-}^\al+\tri)y=-\chi_\om\left(v(f)-v^\de\right) & \mbox{in }\Om\times(0,T),\\
J_{T-}^{1-\al}y=0 & \mbox{in }\Om\times\{T\},\\
y=0 & \mbox{on }\pa\Om\times(0,T),
\end{cases}
\end{equation}
where $\chi_\om$ is the characteristic function of $\om$. As before, we denote the solution to \eqref{eq-IBVP-y0} as $y(f)$ to emphasize its dependence on $f$. For the theoretical analysis and the numerical solution of \eqref{eq-IBVP-y0} in the case of $0<\al<1$, one difficulty is the treatment of the terminal condition, which involves the backward Riemann-Liouville integral as $t\to T$. To circumvent this, we further introduce $z(f):=J_{T-}^{1-\al}y(f)$, which obviously satisfies
\begin{equation}\label{eq-IBVP-z}
\begin{cases}
(\pa_{T-}^\al-\tri)z=-\chi_\om\,J_{T-}^{1-\al}\left(v(f)-v^\de\right) & \mbox{in }\Om\times(0,T),\\
z=0 & \mbox{in }\Om\times\{T\},\\
z=0 & \mbox{on }\pa\Om\times(0,T).
\end{cases}
\end{equation}
Since $v(f)-v^\de\in L^\infty(0,T;L^2(\om))$, the source term of \eqref{eq-IBVP-z} belongs to $L^\ga(0,T;L^2(\Om))$ for any $\ga\in(1,\f1{1-\al})$. Similarly to the proof of Lemma \ref{lem-IBVP-u}(a), one can verify $\pa_t z(f)\in L^\ga(0,T;H_0^1(\Om))$. Hence, differentiating $z(f)=J_{T-}^{1-\al}y(f)$ indicates $D_{T-}^\al y(f)=\pa_t z(f)\in L^\ga(0,T;H_0^1(\Om))$ for any $\ga\in(1,\f1{1-\al})$.

To proceed, we shall turn to the variational method to define the weak solutions to \eqref{eq-IBVP-w0} and \eqref{eq-IBVP-y0}. Motivated by the definition of weak solutions to traditional evolution equations e.g.\! in \cite{E98}, we employ formula \eqref{eq-frac-int1} in Lemma \ref{lem-frac-int} to provide the following definition.

\begin{defi}\label{def-weak-sol0}
Let $0<\al\le1$ and $f,\wt f\in H_0^1(\Om)$.

{\rm(a)} We say that $w(\wt f\,)\in L^\infty(0,T;H_0^1(\Om))$ is a weak solution to \eqref{eq-IBVP-w0} if
\[
\int_0^T\!\!\!\int_\Om\left(\nb w(\wt f\,)\cdot\nb y-w(\wt f\,)\,(D_{T-}^\al y)\right)\rd\bm x\rd t=\int_\Om\wt f\,(J_{T-}^{1-\al}y)(\,\cdot\,,0)\,\rd\bm x
\]
holds for all test functions $y\in L^1(0,T;H_0^1(\Om))$ satisfying $D_{T-}^\al y\in L^1(0,T;L^2(\Om))$ and $J_{T-}^{1-\al}y=0$ in $\Om\times\{T\}$.

{\rm(b)} For some fixed $\ga\in(1,\f1{1-\al}),$ we say that $y(f)\in L^\ga(0,T;H_0^1(\Om))$ satisfying $D_{T-}^\al y(f)\in L^\ga(0,T;L^2(\Om))$ is a weak solution to \eqref{eq-IBVP-y0} if $J_{T-}^{1-\al}y(f)=0$ in $\Om\times\{T\}$ and
\[
\int_0^T\!\!\!\int_\Om\left(\nb w\cdot\nb y(f)-w\,(D_{T-}^\al y(f))\right)\rd\bm x\rd t=\int_0^T\!\!\!\int_\om w\left(v(f)-v^\de\right)\rd\bm x\rd t
\]
holds for all test functions $w\in L^\infty(0,T;H_0^1(\Om))$.
\end{defi}

In the above definition, the weak solution to the backward problem \eqref{eq-IBVP-y0} is slightly stronger than that to the forward problem \eqref{eq-IBVP-w0} because $D_{T-}^\al y(f)$ makes sense in $L^\ga(0,T;L^2(\Om))$. Now we can take $y=y(f)$ and $w=w(\wt f\,)$ as mutual test functions in Definition \ref{def-weak-sol0} to calculate
\begin{align*}
\int_0^T\!\!\!\int_\om w(\wt f\,)\left(v(f)-v^\de\right)\rd\bm x\rd t & =\int_0^T\!\!\!\int_\Om\left(\nb w(\wt f\,)\cdot\nb y(f)-w(\wt f\,)\,(D_{T-}^\al y(f))\right)\rd\bm x\rd t\\
& =\int_\Om\wt f\,(J_{T-}^{1-\al}y(f))(\,\cdot\,,0)\,\rd\bm x=\int_\Om\wt f\,z(f)(\,\cdot\,,0)\,\rd\bm x.
\end{align*}
Plugging the above equality into \eqref{eq-Frechet0}, we obtain
\[
\f{\Phi'(f)\wt f}2=\int_\Om\wt f\,(z(f)(\,\cdot\,,0)-\ka\,\tri f)\,\rd\bm x.
\]
Since $\wt f\in H_0^1(\Om)$ is chosen arbitrarily, it follows from the variational principle that the minimizer $f_*\in H_0^1(\Om)$ of \eqref{eq-def-Phi} is the weak solution to the following boundary value problem for an elliptic equation
\begin{equation}\label{eq-Euler-Lagrange}
\begin{cases}
\ka\,\tri f_*=z(f_*)(\,\cdot\,,0) & \mbox{in }\Om,\\
f_*=0 & \mbox{on }\pa\Om.
\end{cases}
\end{equation}
Consequently, in the same manner as \cite{JLLY17}, we can propose the following iterative thresholding update
\begin{equation}\label{eq-BVP-f}
\left\{\!\begin{alignedat}{2}
& \tri f_{\ell+1}=\f1{M+\ka}z(f_\ell)(\,\cdot\,,0)+\f M{M+\ka}\tri f_\ell & \quad & \mbox{in }\Om,\\
& f_{\ell+1}=0 & \quad & \mbox{on }\pa\Om,
\end{alignedat}\right.
\end{equation}
where $M>0$ is a tuning parameter. Now we summarize the numerical reconstruction scheme for Problem \ref{prob-IMSP} with $0<\al\le1$ as follows.

\begin{algo}\label{algo-1}
Choose a tolerance $\ep>0$, a regularization parameter $\ka>0$ and a tuning parameter $M>0$. Give an initial guess $f_0\in H_0^1(\Om)$ (e.g., $f_0\equiv0$), and set $\ell=0$.
\begin{enumerate}
\item Compute $f_{\ell+1}$ by the iterative update \eqref{eq-BVP-f}.
\item If $\|f_{\ell+1}-f_\ell\|_{H^1(\Om)}/\|f_\ell\|_{H^1(\Om)}<\ep$, stop the iteration. Otherwise, update $\ell\leftarrow\ell+1$ and return to Step 1.
\end{enumerate}
\end{algo}

By choosing $M>0$ suitably large, the convergence of the sequence $\{f_\ell\}_{\ell=0}^\infty\subset H_0^1(\Om)$ is guaranteed by \cite{DDD04}. In each step of the iteration, it suffices to solve $3$ differential equations, i.e., the modified forward problem \eqref{eq-IBVP-v} for $v(f_\ell)$, the backward problem \eqref{eq-IBVP-z} for $z(f_\ell)$, and the boundary value problem for the Poisson equation \eqref{eq-BVP-f} for $f_{\ell+1}$.

\begin{rem}
Similarly, if we penalize the $L^2$-norm of $f$ instead of $\nb f$ in the definition \eqref{eq-def-Phi} of $\Phi(f)$, then the resulting iterative thresholding update turns out to be
\[
f_{\ell+1}=\f1{M+\ka}z(f_\ell)(\,\cdot\,,0)+\f M{M+\ka}f_\ell,
\]
which differs from the scheme \eqref{eq-BVP-f} only on the cost of solving a Poisson equation numerically. Nevertheless, since we assumed $f\in H_0^1(\Om)$, such sacrifice on the computational cost is unimportant in view of pursuing more accurate reconstruction of the profile in the $H^1$ sense.
\end{rem}

\Subsection{Case of $1<\al\le2$}

Now we consider \eqref{eq-IBVP-u1} in the case of $1<\al\le2$ with $f\in H_0^1(\Om)$. Analogously to the proof of Lemma \ref{lem-IBVP-u}(b), one can easily check
\[
u(f)\in L^\infty(0,T;H^2(\Om)\cap H_0^1(\Om)),\quad\pa_t u(f)\in L^\ga(0,T;H_0^1(\Om)),
\]
where $\ga\in(1,\f1{2-\al})$. Again setting $v(f):=J_{0+}^{2-\al}(\pa_t+\bm p\cdot\nb)u(f)$, $v^\de:=J_{0+}^{2-\al}(\pa_t+\bm p\cdot\nb)u^\de$ and freezing the boundary condition of $v(f)$ as $J_{0+}^{2-\al}(\bm p\cdot\nb u^\de)$, we know $v(f)\in W^{1,\infty}(0,T;H^1(\Om))$, $v^\de\in L^\infty(0,T;L^2(\om))$ and $v(f)$ satisfies
\[
\begin{cases}
(\pa_{0+}^\al-\tri)v=0 & \mbox{in }\Om\times(0,T),\\
v=0,\ \pa_t v=f & \mbox{in }\Om\times\{0\},\\
v=J_{0+}^{2-\al}(\bm p\cdot\nb u^\de) & \mbox{on }\pa\Om\times(0,T).
\end{cases}
\]
Treating the same minimization problem \eqref{eq-def-Phi} and repeating the calculation of the Fr\'echet derivative, again we obtain \eqref{eq-Frechet0}, where $w(\wt f)\in W^{1,\infty}(0,T;H_0^1(\Om))$ satisfies
\begin{equation}\label{eq-IBVP-w1}
\begin{cases}
(\pa_{0+}^\al-\tri)w=0 & \mbox{in }\Om\times(0,T),\\
w=0,\ \pa_t w=\wt f & \mbox{in }\Om\times\{0\},\\
w=0 & \mbox{on }\pa\Om\times(0,T).
\end{cases}
\end{equation}

In a parallel manner as before, we investigate the solution $y(f)$ to the backward problem
\begin{equation}\label{eq-IBVP-y1}
\begin{cases}
(D_{T-}^\al-\tri)y=\chi_\om\left(v(f)-v^\de\right) & \mbox{in }\Om\times(0,T),\\
J_{T-}^{2-\al}y=D_{T-}^{\al-1}y=0 & \mbox{in }\Om\times\{T\},\\
y=0 & \mbox{on }\pa\Om\times(0,T)
\end{cases}
\end{equation}
and the further auxiliary function $z(f):=J_{T-}^{2-\al}y(f)$ satisfying
\[
\begin{cases}
(\pa_{T-}^\al-\tri)z=\chi_\om\,J_{T-}^{2-\al}\left(v(f)-v^\de\right) & \mbox{in }\Om\times(0,T),\\
z=\pa_t z=0 & \mbox{in }\Om\times\{T\},\\
z=0 & \mbox{on }\pa\Om\times(0,T).
\end{cases}
\]
By a similar argument as that in the previous subsection, one can verify $y(f),D_{T-}^{\al-1}y(f)\in L^\ga(0,T;H_0^1(\Om))$ with some $\ga\in(1,\f1{2-\al})$. Next, we employ formulae \eqref{eq-frac-int2a} and \eqref{eq-frac-int2b} in Lemma \ref{lem-frac-int}(b) to define the weak solutions to \eqref{eq-IBVP-w1} and \eqref{eq-IBVP-y1}, respectively.

\begin{defi}\label{def-weak-sol1}
Let $1<\al\le2$ and $f,\wt f\in H_0^1(\Om)$.

{\rm(a)} We say that $w(\wt f\,)\in L^\infty(0,T;H_0^1(\Om))\cap W^{1,\infty}(0,T;L^2(\Om))$ is a weak solution to \eqref{eq-IBVP-w1} if
\[
\int_0^T\!\!\!\int_\Om\left\{\nb w(\wt f\,)\cdot\nb y-\left(\pa_t w(\wt f\,)\right)(D_{T-}^{\al-1}y)\right\}\rd\bm x\rd t=\int_\Om\wt f\,(J_{T-}^{2-\al}y)(\,\cdot\,,0)\,\rd\bm x
\]
holds for all test functions $y\in L^1(0,T;H_0^1(\Om))$ satisfying $D_{T-}^{\al-1}y\in L^1(0,T;L^2(\Om))$ and $J_{T-}^{2-\al}y=0$ in $\Om\times\{T\}$.

{\rm(b)} For some fixed $\ga\in(1,\f1{2-\al}),$ we say that $y(f)\in L^\ga(0,T;H_0^1(\Om))$ satisfying $D_{T-}^{\al-1}y(f)\in L^\ga(0,T;L^2(\Om))$ is a weak solution to \eqref{eq-IBVP-y1} if $J_{T-}^{2-\al}y(f)=D_{T-}^{\al-1}y(f)=0$ in $\Om\times\{T\}$ and
\[
\int_0^T\!\!\!\int_\Om\left(\nb w\cdot\nb y(f)-(\pa_t w)\,(D_{T-}^{\al-1}y(f))\right)\rd\bm x\rd t=\int_0^T\!\!\!\int_\om w\left(v(f)-v^\de\right)\rd\bm x\rd t
\]
holds for all test functions $w\in L^\infty(0,T;H_0^1(\Om))\cap W^{1,\infty}(0,T;L^2(\Om))$ satisfying $w(f)=0$ in $\Om\times\{0\}$.
\end{defi}

Taking $w(\wt f\,)$ and $y(f)$ as mutual test functions in the above definition, again we obtain
\begin{align*}
\int_0^T\!\!\!\int_\om w(\wt f\,)\left(v(f)-v^\de\right)\rd\bm x\rd t & =\int_0^T\!\!\!\int_\Om\left\{\nb w(\wt f\,)\cdot\nb y(f)-\left(\pa_t w(\wt f\,)\right)(D_{T-}^{\al-1}y(f))\right\}\rd\bm x\rd t\\
& =\int_\Om\wt f\,(J_{T-}^{2-\al}y(f))(\,\cdot\,,0)\,\rd\bm x=\int_\Om\wt f\,z(f)(\,\cdot\,,0)\,\rd\bm x.
\end{align*}
As a result, formally we arrive at the same Euler-Lagrange equation \eqref{eq-Euler-Lagrange} and thus the thresholding iterative update \eqref{eq-BVP-f}.

\Section{Reconstruction scheme for two unknown profiles}\label{sec-scheme2}

On the same direction as that of the previous section, in this section we attempt to develop a numerical reconstruction scheme for Problem \ref{prob-IMSP} in the case of two unknown profiles. More precisely, let $1<\al\le2$, $f,g\in H_0^2(\Om)$ and denote by $u(f,g)$ the solution to \eqref{eq-IBVP-u}--\eqref{eq-def-F}. Similarly as before, by $f_\true,g_\true\in H_0^2(\Om)$ and $u^\de$ we denote the true profiles and the observation data, respectively. As a generalization of \eqref{eq-asp-data}, here we assume that there exists $\ga\in(1,\f1{2-\al})$ such that
\[
u^\de\in W^{2,\ga}(0,T;H^2(\om)),\quad\|u(f_\true,g_\true)-u^\de\|_{W^{2,\ga}(0,T;H^2(\om))}\le\de.
\]
Meanwhile, introducing
\[
v(f,g):=J_{0+}^{2-\al}(\pa_t+\bm p\cdot\nb)(\pa_t+\bm q\cdot\nb)u(f,g),\quad v^\de:=J_{0+}^{2-\al}(\pa_t+\bm p\cdot\nb)(\pa_t+\bm q\cdot\nb)u^\de
\]
and freezing the boundary condition of $v(f,g)$ as
\[
v^\de=(\bm p+\bm q)\cdot\nb(\pa_{0+}^{\al-1}u^\de)+\bm p\cdot\nb(\bm q\cdot\nb(J_{0+}^{2-\al}u^\de))\quad\mbox{on }\pa\Om\times(0,T),
\]
we see that $v^\de\in L^\infty(0,T;L^2(\om))$ and it was shown in \cite[\S4.2]{LHY20} that $v(f,g)$ satisfies
\begin{equation}\label{eq-IBVP-v2}
\begin{cases}
(\pa_{0+}^\al-\tri)v=0 & \mbox{in }\Om\times(0,T),\\
v=f+g,\ \pa_t v=\bm q\cdot\nb f+\bm p\cdot\nb g & \mbox{in }\Om\times\{0\},\\
v=v^\de & \mbox{on }\pa\Om\times(0,T).
\end{cases}
\end{equation}
Then it follows from Lemma \ref{lem-IBVP-u}(b) and H\"older's inequality that $v(f,g)\in L^\infty(0,T;H^1(\Om))$ for any $f,g\in H_0^2(\Om)$.

Since the unknown profiles $f,g$ appears in the initial values of \eqref{eq-IBVP-v2}, it is natural to divide the reconstruction into two steps, i.e., the simultaneous recovery of the initial values and then the determination of $f,g$ by the information of $f+g$ and $\bm q\cdot\nb f+\bm p\cdot\nb g$. To this end, it is convenient to rewrite the initial condition of \eqref{eq-IBVP-v2} as
\begin{equation}\label{eq-ab-fg}
v(\,\cdot\,,0)=a:=f+g\in H_0^2(\Om),\quad \pa_t v(\,\cdot\,,0)=b:=\bm q\cdot\nb f+\bm p\cdot\nb g\in H_0^1(\Om)
\end{equation}
and denote the solution to \eqref{eq-IBVP-v2} as $v(a,b)$.

\Subsection{Iterative thresholding scheme for the initial values}

As a natural generalization of the minimization problem \eqref{eq-def-Phi}, we consider the following multivariable minimization problem with two penalty terms
\begin{equation}\label{eq-def-Psi}
\min_{(a,b)\in H_0^2(\Om)\times H_0^1(\Om)}\Psi(a,b):=\left\|v(a,b)-v^\de\right\|_{L^2(\om\times(0,T))}^2+\ka_2\|\tri a\|_{L^2(\Om)}^2+\ka_1\|\nb b\|_{L^2(\Om)}^2,
\end{equation}
where $\ka_1,\ka_2>0$ are regularization parameters. Here, owing to $a\in H_0^2(\Om)$ and $b\in H_0^1(\Om)$, we know that $\|\tri a\|_{L^2(\Om)}$ and $\|\nb b\|_{L^2(\Om)}$ are equivalent to $\|a\|_{H^2(\Om)}$ and $\|b\|_{H^1(\Om)}$, respectively.

To calculate the Fr\'echet derivative of $\Psi$ for $(a,b)\in H_0^2(\Om)\times H_0^1(\Om)$ on the direction $(\wt a,\wt b\,)\in H_0^2(\Om)\times H_0^1(\Om)$, we pick a small $\ve>0$ to calculate
\begin{align*}
\f{\Psi(a+\ve\,\wt a,b+\ve\,\wt b\,)-\Psi(a,b)}\ve & =\int_0^T\!\!\!\int_\om w(\wt a,\wt b\,)\left\{v(a+\ve\,\wt a,b+\ve\,\wt b\,)+v(a,b)-2\,v^\de\right\}\rd\bm x\rd t\\
& \quad\,+\ka_2\int_\Om\tri\wt a\,\tri(2a+\ve\,\wt a\,)\,\rd\bm x+\ka_1\int_\Om\nb\wt b\cdot\nb(2b+\ve\,\wt b\,)\,\rd\bm x,
\end{align*}
where $w(\wt a,\wt b\,):=(v(a+\ve\,\wt a,b+\ve\,\wt b\,)-v(a,b))/\ve\in L^\infty(0,T;H_0^1(\Om))$ satisfies
\begin{equation}\label{eq-IBVP-w2}
\begin{cases}
(\pa_{0+}^\al-\tri)w=0 & \mbox{in }\Om\times(0,T),\\
w=\wt a,\ \pa_t w=\wt b & \mbox{in }\Om\times\{0\},\\
w=0 & \mbox{on }\pa\Om\times(0,T).
\end{cases}
\end{equation}
Then by passing $\ve\to0$ and integration by parts, we deduce
\begin{align}
\f{\nb\Psi(a,b)\cdot(\wt a,\wt b\,)}2 & =\lim_{\ve\to0}\f{\Psi(a+\ve\,\wt a,b+\ve\,\wt b\,)-\Psi(a,b)}{2\ve}\nonumber\\
& =\int_0^T\!\!\!\int_\om w(\wt a,\wt b\,)\left(v(a,b)-v^\de\right)\rd\bm x\rd t+\ka_2\int_\Om\tri\wt a\,\tri a\,\rd\bm x+\ka_1\int_\Om\nb\wt b\cdot\nb b\,\rd\bm x\nonumber\\
& =\int_0^T\!\!\!\int_\Om w(\wt a,\wt b\,)\left\{\chi_\om\left(v(a,b)-v^\de\right)\right\}\rd\bm x\rd t+\int_\Om\left(\ka_2\wt a\,\tri^2a-\ka_1\wt b\,\tri b\right)\rd\bm x\nonumber\\
& =\int_\Om\left[\int_0^T w(\wt a,\wt b\,)\left\{\chi_\om\left(v(a,b)-v^\de\right)\right\}\rd t+\begin{pmatrix}
\wt a\,\\
\wt b
\end{pmatrix}\cdot\begin{pmatrix}
\ka_2\tri^2a\\
-\ka_1\tri b
\end{pmatrix}\right]\rd\bm x,\label{eq-Frechet1}
\end{align}
where we employed $v(a+\ve\,\wt a,b+\be\,\wt b\,)\longrightarrow v(a,b)$ in $L^1(0,T;L^2(\om))$ as $\ve\to0$, and interpret $\tri^2a\in H^{-2}(\Om)$ and $\tri b\in H^{-1}(\Om)$ for convenience.

In order to derive the Euler-Lagrange equation for the minimizer of \eqref{eq-def-Psi}, again we shall turn to the solution $y(a,b)$ to the backward problem
\begin{equation}\label{eq-IBVP-y2}
\begin{cases}
(D_{T-}^\al-\tri)y=\chi_\om\left(v(a,b)-v^\de\right) & \mbox{in }\Om\times(0,T),\\
J_{T-}^{2-\al}y=D_{T-}^{\al-1}y=0 & \mbox{in }\Om\times\{T\},\\
y=0 & \mbox{on }\pa\Om\times(0,T)
\end{cases}
\end{equation}
as well as its Caputo counterpart $z(a,b):=J_{T-}^{2-\al}y(a,b)$ satisfying
\begin{equation}\label{eq-ibvp-z2}
\begin{cases}
(\pa_{T-}^\al-\tri)z=\chi_\om\,J_{T-}^{2-\al}\left(v(a,b)-v^\de\right) & \mbox{in }\Om\times(0,T),\\
z=\pa_t z=0 & \mbox{in }\Om\times\{T\},\\
z=0 & \mbox{on }\pa\Om\times(0,T).
\end{cases}
\end{equation}
Similarly to the argument for \eqref{eq-IBVP-y0} and \eqref{eq-IBVP-z}, one can employ Lemma \ref{lem-IBVP-u}(b) to verify that
\[
y(a,b)=\pa_{T-}^{\al-1}z(a,b)\in L^\ga(0,T;H_0^1(\Om)),\quad D_{T-}^\al y(a,b)=\pa_t^2z(a,b)\in L^\ga(0,T,L^2(\Om))
\]
for any $\ga\in(1,\f1{2-\al})$.

Now we are well prepared to provide the definitions of the weak solutions to \eqref{eq-IBVP-w2} and \eqref{eq-IBVP-y2}.

\begin{defi}\label{def-weak-sol2}
Let $1<\al\le2$ and $a,b,\wt a,\wt b\in H_0^2(\Om)$.

{\rm(a)} We say that $w(\wt a,\wt b\,)\in L^\infty(0,T;H_0^1(\Om))$ is a weak solution to \eqref{eq-IBVP-w2} if
\[
\int_0^T\!\!\!\int_\Om\left(\nb w(\wt a,\wt b\,)\cdot\nb y+w(\wt a,\wt b\,)\,(D_{T-}^\al y)\right)\rd\bm x\rd t=\int_\Om\left(\wt b\,J_{T-}^{2-\al}y-\wt a\,D_{T-}^{\al-1}y\right)(\,\cdot\,,0)\,\rd\bm x
\]
holds for all test functions $y\in L^1(0,T;H_0^1(\Om))$ satisfying $D_{T-}^\al y\in L^1(0,T;L^2(\Om))$ and $J_{T-}^{2-\al}y=D_{T-}^{\al-1}y=0$ in $\Om\times\{T\}$.

{\rm(b)} For some fixed $\ga\in(1,\f1{2-\al}),$ we say that $y(a,b)\in L^\ga(0,T;H_0^1(\Om))$ satisfying $D_{T-}^\al y(a,b)\in L^\ga(0,T;L^2(\Om))$ is a weak solution to \eqref{eq-IBVP-y2} if $J_{T-}^{2-\al}y(a,b)=D_{T-}^{\al-1}y(a,b)=0$ in $\Om\times\{T\}$ and
\[
\int_0^T\!\!\!\int_\Om\left(\nb w\cdot\nb y(a,b)+w\,(D_{T-}^\al y(a,b))\right)\rd\bm x\rd t=\int_0^T\!\!\!\int_\om w\left(v(a,b)-v^\de\right)\rd\bm x\rd t
\]
holds for all test functions $w\in L^\infty(0,T;H_0^1(\Om))$.
\end{defi}

In comparison with Definition \ref{def-weak-sol1}, the above definition requires lower time regularity for the solution to the forward problem \eqref{eq-IBVP-w2}, whereas requires higher time regularity for that to the backward problem \eqref{eq-IBVP-y2}. As before, it is routine to take $y(a,b)$ and $w(\wt a,\wt b\,)$ as mutual test functions to deduce
\begin{align*}
\int_0^T\!\!\!\int_\om w(\wt a,\wt b\,)\left(v(a,b)-v^\de\right)\rd\bm x\rd t & =\int_0^T\!\!\!\int_\Om\left(\nb w(\wt a,\wt b\,)\cdot\nb y(a,b)+w(\wt a,\wt b\,)\,(D_{T-}^\al y(a,b))\right)\rd\bm x\rd t\\
& =\int_\Om\left(\wt b\,J_{T-}^{2-\al}y(a,b)-\wt a\,D_{T-}^{\al-1}y(a,b)\right)(\,\cdot\,,0)\,\rd\bm x\\
& =\int_\Om\begin{pmatrix}
\wt a\,\\
\wt b
\end{pmatrix}\cdot\begin{pmatrix}
-\pa_t z(a,b)\\
z(a,b)
\end{pmatrix}(\,\cdot\,,0)\,\rd\bm x.
\end{align*}
Substituting the above equality into \eqref{eq-Frechet1}, we arrive at
\[
\f{\nb\Psi(a,b)\cdot(\wt a,\wt b\,)}2=\int_\Om\begin{pmatrix}
\wt a\,\\
\wt b
\end{pmatrix}\cdot\begin{pmatrix}
-\pa_t z(a,b)(\,\cdot\,,0)+\ka_2\tri^2a\\
z(a,b)(\,\cdot\,,0)-\ka_1\tri b
\end{pmatrix}\rd\bm x.
\]
In other words, the minimizer $(a_*,b_*)\in H_0^2(\Om)\times H_0^1(\Om)$ of \eqref{eq-def-Psi} is the weak solution to the following Euler-Lagrange equation
\[
\begin{cases}
\ka_2\tri^2a_*=\pa_t z(a_*,b_*)(\,\cdot\,,0),\\
\ka_1\tri b_*=z(a_*,b_*)(\,\cdot\,,0)
\end{cases}\mbox{in }\Om.
\]
Especially, $a_*$ satisfies a bi-Laplace equation in the sense of $H^{-2}(\Om)$. Therefore, in a similar manner as before, we can design the following iterative thresholding update to produce a sequence $\{(a_\ell,b_\ell)\}\subset H_0^2(\Om)\times H_0^1(\Om)$ approximating $(a_*,b_*)$:
\begin{equation}\label{eq-BVP-ab}
\left\{\!\begin{aligned}
& \tri^2a_{\ell+1}=\f1{M_2+\ka_2}\pa_t z(a_\ell,b_\ell)(\,\cdot\,,0)+\f{M_2}{M_2+\ka_2}\tri^2a_\ell,\\
& \tri b_{\ell+1}=\f1{M_1+\ka_1}z(a_\ell,b_\ell)(\,\cdot\,,0)+\f{M_1}{M_1+\ka_1}\tri b_\ell
\end{aligned}\right.\quad\mbox{in }\Om,
\end{equation}
where $M_1,M_2>0$ are large parameters guaranteeing the convergence of \eqref{eq-BVP-ab}. By terminating \eqref{eq-BVP-ab} appropriately, one can obtain a good approximation of $(a_*,b_*)$. Parallel to the iterative update \eqref{eq-BVP-f} for recovering one profile, the implementation of \eqref{eq-BVP-ab} only involves the solutions of $2$ evolution equations, that is, \eqref{eq-IBVP-v2} and \eqref{eq-ibvp-z2}.

We close this subsection by summarizing the numerical reconstruction scheme for Problem \ref{prob-IMSP} with two unknown profiles as follows.

\begin{algo}\label{algo-2}
Choose a tolerance $\ep>0$, two regularization parameters $\ka_1,\ka_2>0$ and two tuning parameters $M_1,M_2>0$. Give an initial guess $(a_0,b_0)\in H_0^2(\Om)\times H_0^1(\Om)$ (e.g., $a_0=b_0\equiv0$), and set $\ell=0$.
\begin{enumerate}
\item Compute $(a_{\ell+1},b_{\ell+1})$ by the iterative update \eqref{eq-BVP-ab}.
\item If $\max\{\|a_{\ell+1}-a_\ell\|_{H^2(\Om)}/\|a_\ell\|_{H^2(\Om)},\|b_{\ell+1}-b_\ell\|_{H^1(\Om)}/\|b_\ell\|_{H^1(\Om)}\}<\ep$, stop the iteration. Otherwise, update $\ell\leftarrow\ell+1$ and return to Step 1.
\end{enumerate}
\end{algo}

\Subsection{Elliptic approach to solving the convection equation}\label{sec-convection}

Supposing that the initial values of \eqref{eq-IBVP-v2} are known, in this subsection we develop an efficient numerical scheme to reconstruct the two unknown profiles. By the relation \eqref{eq-ab-fg}, it suffices to solve the convection equation
\begin{equation}\label{eq-convection}
\bm r\cdot\nb f=c:=b+\bm p\cdot\nb a\in H_0^1(\Om),\quad f\in H_0^2(\Om)
\end{equation}
for $f$ and thus $g=a-f$, where $\BR^d\ni\bm r:=\bm q-\bm p\ne\bm0$. Since the unknown function $f$ is sufficiently smooth and vanishes on $\pa\Om$, we attempt to transform the first order equation \eqref{eq-convection} to some second order elliptic equations for better stability.

To this end, we introduce the change of variables
\begin{equation}\label{eq-coordinate}
\bm\xi=\bm\xi(\bm x):=\bm Q^\T\bm x,\quad\mathrm{SO}(d)\ni\bm Q:=\begin{pmatrix}
\displaystyle\f{\bm r}{|\bm r|} & *
\end{pmatrix},\quad\wh\Om:=\{\bm Q^\T\bm x\mid\bm x\in\Om\},
\end{equation}
where $*$ denotes some $d\times(d-1)$ matrix. In other words, $\bm Q$ is a $d\times d$ orthogonal matrix whose first column is normalized along $\bm r$. Correspondingly, defining the auxiliary functions
\[
\wh f(\bm\xi):=f(\bm x(\bm\xi))=f(\bm Q\bm\xi),\quad\wh c(\bm\xi):=c(\bm x(\bm\xi))=c(\bm Q\bm\xi),
\]
we know $\wh f\in H_0^2(\wh\Om)$ and $\wh c\in H_0^1(\wh\Om)$ due to the invertibility and the smoothness of this change of variables. Especially, by \eqref{eq-convection} we calculate
\begin{equation}\label{eq-xi1}
\f\pa{\pa\xi_1}\wh f(\bm\xi)=\f\pa{\pa\xi_1}\left(\f{\xi_1\bm r}{|\bm r|}\right)\cdot\nb_{\bm x}f(\bm Q\bm\xi)=\f{\bm r}{|\bm r|}\cdot\nb_{\bm x}f(\bm Q\bm\xi)=\f{c(\bm Q\bm\xi)}{|\bm r|}=\f{\wh c(\bm\xi)}{|\bm r|},\quad\bm\xi\in\wh\Om.
\end{equation}
Noting that \eqref{eq-xi1} only involves the first derivative in $\xi_1$, we write $\bm\xi=(\xi_1;\bm\xi')$, where $\bm\xi'\in\BR^{d-1}$ is regarded as a parameter. Therefore, we can understand the partial derivative $\f\pa{\pa\xi_1}$ in \eqref{eq-xi1} as an ordinary one, and differentiating both sides of \eqref{eq-xi1} with respect to $\xi_1$ leads to a series of boundary value problems for second order ordinary differential equations with parameters $\bm\xi'$:
\begin{equation}\label{eq-BVP-ODE}
\left\{\!\begin{alignedat}{2}
& \f{\rd^2\wh f(\xi_1;\bm\xi')}{\rd\xi_1^2}=\f1{|\bm r|}\f{\rd\wh c(\xi_1;\bm\xi')}{\rd\xi_1}, & \quad & (\xi_1;\bm\xi')\in\wh\Om,\\
& \wh f(\xi_1;\bm\xi')=0, & \quad & (\xi_1,\bm\xi')\in\pa\wh\Om.
\end{alignedat}\right.
\end{equation}
Hence, to reconstruct $\wh f$ in $\wh\Om$, it suffices solve \eqref{eq-BVP-ODE} line by line along $\xi_1$-axis for any fixed $\bm\xi'$, which is expected to be numerically cheap. As long as \eqref{eq-BVP-ODE} is solved for all $\bm\xi\in\wh\Om$, one can recover $f$ in $\Om$ by the relation $f(\bm x)=\wh f(\bm Q^\T\bm x)$ and finally $g=a-f$.

We summarize the above procedure by an illustrative example in Figure \ref{fig-geo}. First, we perform the coordinate transformation \eqref{eq-coordinate} so that the direction of the first coordinate $\xi_1$ coincides with that of $\bm r$. Next, direct calculations yield that the convection equation \eqref{eq-convection} in the original coordinate system is reduced to a special one \eqref{eq-xi1} in the new system, which is a series of first order ordinary differential equations with respect to $\xi_1$. Finally, utilizing the homogeneous boundary condition in \eqref{eq-convection}, we differentiate \eqref{eq-xi1} in $\xi_1$ and arrive at a series of boundary value problems \eqref{eq-BVP-ODE} of the second order. As is indicated by the shade in Figure \ref{fig-geo}, \eqref{eq-BVP-ODE} can be solved along each segment in $\Om$ on the direction of $\bm r$.
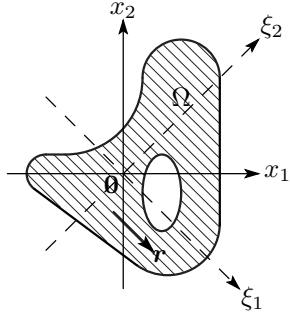
\begin{figure}[htbp]\centering
{\unitlength 0.1in%
\begin{picture}(13.0000,15.2300)(1.0000,-15.0000)%
%
\special{pn 13}%
\special{ar 400 400 400 400 6.2831853 1.5707963}%
%
\special{pn 13}%
\special{ar 1000 400 200 200 3.1415927 6.2831853}%
%
\special{pn 13}%
\special{ar 300 900 100 100 2.2142974 4.7123890}%
%
\special{pn 13}%
\special{pa 300 800}%
\special{pa 400 800}%
\special{fp}%
%
\special{pn 13}%
\special{ar 930 1160 270 270 6.2831853 2.2035452}%
%
\special{pn 13}%
\special{ar 900 1000 100 200 0.0000000 6.2831853}%
%
\special{pn 4}%
\special{pa 1195 815}%
\special{pa 800 420}%
\special{fp}%
\special{pa 1195 755}%
\special{pa 805 365}%
\special{fp}%
\special{pa 1195 695}%
\special{pa 820 320}%
\special{fp}%
\special{pa 1195 635}%
\special{pa 845 285}%
\special{fp}%
\special{pa 1195 575}%
\special{pa 875 255}%
\special{fp}%
\special{pa 1195 515}%
\special{pa 910 230}%
\special{fp}%
\special{pa 1195 455}%
\special{pa 950 210}%
\special{fp}%
\special{pa 1195 395}%
\special{pa 1005 205}%
\special{fp}%
\special{pa 1180 320}%
\special{pa 1085 225}%
\special{fp}%
\special{pa 1195 875}%
\special{pa 795 475}%
\special{fp}%
\special{pa 1195 935}%
\special{pa 780 520}%
\special{fp}%
\special{pa 1195 995}%
\special{pa 765 565}%
\special{fp}%
\special{pa 1195 1055}%
\special{pa 745 605}%
\special{fp}%
\special{pa 885 805}%
\special{pa 720 640}%
\special{fp}%
\special{pa 845 825}%
\special{pa 695 675}%
\special{fp}%
\special{pa 825 865}%
\special{pa 665 705}%
\special{fp}%
\special{pa 810 910}%
\special{pa 630 730}%
\special{fp}%
\special{pa 800 960}%
\special{pa 595 755}%
\special{fp}%
\special{pa 800 1020}%
\special{pa 555 775}%
\special{fp}%
\special{pa 810 1090}%
\special{pa 510 790}%
\special{fp}%
\special{pa 1055 1395}%
\special{pa 455 795}%
\special{fp}%
\special{pa 1010 1410}%
\special{pa 400 800}%
\special{fp}%
\special{pa 965 1425}%
\special{pa 340 800}%
\special{fp}%
\special{pa 905 1425}%
\special{pa 285 805}%
\special{fp}%
\special{pa 820 1400}%
\special{pa 245 825}%
\special{fp}%
\special{pa 615 1255}%
\special{pa 215 855}%
\special{fp}%
\special{pa 375 1075}%
\special{pa 205 905}%
\special{fp}%
\special{pa 1090 1370}%
\special{pa 915 1195}%
\special{fp}%
\special{pa 1120 1340}%
\special{pa 955 1175}%
\special{fp}%
\special{pa 1150 1310}%
\special{pa 975 1135}%
\special{fp}%
\special{pa 1170 1270}%
\special{pa 990 1090}%
\special{fp}%
\special{pa 1185 1225}%
\special{pa 1000 1040}%
\special{fp}%
\special{pa 1195 1175}%
\special{pa 1000 980}%
\special{fp}%
\special{pa 1195 1115}%
\special{pa 990 910}%
\special{fp}%
%
\special{pn 13}%
\special{pa 650 1100}%
\special{pa 850 1300}%
\special{fp}%
\special{sh 1}%
\special{pa 850 1300}%
\special{pa 817 1239}%
\special{pa 812 1262}%
\special{pa 789 1267}%
\special{pa 850 1300}%
\special{fp}%
\put(10.0000,-5.0000){\makebox(0,0){$\Omega$}}%
\put(8.5000,-13.0000){\makebox(0,0)[lt]{$\bm r$}}%
%
\special{pn 13}%
\special{pa 1200 400}%
\special{pa 1200 1170}%
\special{fp}%
%
\special{pn 13}%
\special{pa 240 980}%
\special{pa 770 1378}%
\special{fp}%
%
\special{pn 8}%
\special{pa 100 900}%
\special{pa 1400 900}%
\special{fp}%
\special{sh 1}%
\special{pa 1400 900}%
\special{pa 1333 880}%
\special{pa 1347 900}%
\special{pa 1333 920}%
\special{pa 1400 900}%
\special{fp}%
%
\special{pn 8}%
\special{pa 700 1500}%
\special{pa 700 100}%
\special{fp}%
\special{sh 1}%
\special{pa 700 100}%
\special{pa 680 167}%
\special{pa 700 153}%
\special{pa 720 167}%
\special{pa 700 100}%
\special{fp}%
%
\special{pn 8}%
\special{pa 300 500}%
\special{pa 1300 1500}%
\special{da 0.070}%
\special{sh 1}%
\special{pa 1300 1500}%
\special{pa 1267 1439}%
\special{pa 1262 1462}%
\special{pa 1239 1467}%
\special{pa 1300 1500}%
\special{fp}%
%
\special{pn 8}%
\special{pa 300 1300}%
\special{pa 1400 200}%
\special{da 0.070}%
\special{sh 1}%
\special{pa 1400 200}%
\special{pa 1339 233}%
\special{pa 1362 238}%
\special{pa 1367 261}%
\special{pa 1400 200}%
\special{fp}%
\put(6.0000,-10.0000){\makebox(0,0)[lb]{$\bm0$}}%
\put(15.0000,-9.0000){\makebox(0,0){$x_1$}}%
\put(7.0000,-0.5000){\makebox(0,0){$x_2$}}%
\put(13.7000,-15.5000){\makebox(0,0){$\xi_1$}}%
\put(14.7000,-1.3000){\makebox(0,0){$\xi_2$}}%
\end{picture}}%
\caption{A two-dimensional illustration of the idea of transforming the convection equation \eqref{eq-convection} into a series of second order ordinary differential equations \eqref{eq-BVP-ODE}.}\label{fig-geo}
\end{figure}

\begin{ex}
(1) For $d=1$, it suffices to differentiate \eqref{eq-convection} to obtain a two-point boundary value problem
\[
\begin{cases}
r\,f''=c' & \mbox{in }\Om,\\
f=0 & \mbox{on }\pa\Om,
\end{cases}
\]l
which is automatically included in the framework of \eqref{eq-BVP-ODE}.

(2) For $d=2$, consider $\Om:=\{|\bm x|<1\}$ and $\bm r:=(1,1)^\T$. Then by the above argument, one can choose $\displaystyle\bm Q=\f1{\sqrt2}\begin{pmatrix}
1 & -1\\
1 & 1
\end{pmatrix}$, and \eqref{eq-BVP-ODE} becomes a series of two-point boundary value problems with the parameter $\xi_2\in(-1,1)$:
\[
\left\{\!\begin{alignedat}{2}
& \f{\rd^2\wh f(\xi_1;\xi_2)}{\rd\xi_1^2}=\f1{\sqrt2}\f{\rd\wh c(\xi_1;\xi_2)}{\rd\xi_1}, & \quad & |\xi_1|<\sqrt{1-\xi_2^2}\,,\\
& \wh f(\xi_1;\xi_2)=0, & \quad & |\xi_1|=\sqrt{1-\xi_2^2}\,.
\end{alignedat}\right.
\]
\end{ex}

\Section{Concluding remarks}\label{sec-rem}

Regardless of the vast difference mainly in quantitative analysis, many inverse problems for (time-fractional) evolution equations share certain similarity in methodology and qualitative properties. Accordingly, for some specified inverse problems, it seems reasonable and possible to develop universal numerical reconstruction scheme valid for any fractional orders $\al\in(0,2]$. As a typical candidate, in \cite{LHY20} and this article, we investigated the inverse moving source problem on determining moving profile(s) in \eqref{eq-IBVP-u}--\eqref{eq-def-F}, which turns out to be novel for (time-fractional) evolution equations from both theoretical and numerical aspects. Guaranteed by the uniqueness claimed in \cite{LHY20}, here we adopted a conventional minimization procedure to construct iterative thresholding schemes (Algorithms \ref{algo-1} and \ref{algo-2}) for recovering one or two unknown profile(s), which follows the same line as that in \cite{JLLY17}. However, unlike \cite{JLLY17}, here we actually proposed two independent numerical methods, namely,
\begin{enumerate}
\item an iterative method for classical backward problems on determining initial values, and
\item an elliptic method for convection equations with whole boundary conditions.
\end{enumerate}
The first method seems novel for the simultaneous reconstruction of two initial values in the case of $1<\al<2$, and the second one is expected to improve the numerical stability and efficiency. Especially, since usual convection equations only require the in-flow condition on a partial boundary, to the author's knowledge there seems no literature on this direction.

Finally, we summarize several important future topics related to this article.
\begin{enumerate}
\item Needless to say, the immediate task should be the implementation of the proposed algorithms. Examining the whole process, we shall apply some mollification method for differentiating the noisy data, and prepare a forward solver for $1<\al<2$. Meanwhile, it would be interesting to observe the numerical performance as $\al\to2$. If the equation behaves closer to a wave equation in the sense of the finite wave propagation speed, it should require more observation time for stable recovery.
\item Similarly to \cite{LHY20}, the numerical schemes proposed in previous sections work for more general formulation than \eqref{eq-IBVP-u}, e.g., an elliptic part with constant coefficients instead of $-\tri$. However, the key introduction \cite[eq.\! (4.2)]{LHY20} of auxiliary functions fails whenever the governing operator involves variable coefficients. It would be interesting to find alternative ways to overcome this difficulty.
\item The assumption \eqref{eq-def-F} on the translation of moving sources seems restrictive and mostly unrealistic. Inspired by \cite{HKLZ19}, one can consider the replacement of \eqref{eq-def-F} e.g.\! with
    \begin{equation}\label{eq-def-F-new}
    F(\bm x,t)=\begin{cases}
    f(\bm x-\bm\rho(t))\,\te(t), & 0<\al\le1,\\
    f(\bm x-\bm\rho(t))\,\te(t)+g(\bm x-\bm\ze(t))\,\eta(t), & 1<\al\le2,
    \end{cases}
    \end{equation}
    where $\bm\rho,\bm\ze$ denote the orbits of moving sources, and $\te,\eta$ model the time evolution of source magnitude.
\item As another type of inverse moving source problems, the identification of moving orbits was studied in \cite{HLY20}, whereas its numerical reconstruction is absent except for hyperbolic equations. In the framework of \eqref{eq-def-F-new}, the orbit inverse problems deserve reconsideration especially from a numerical viewpoint.
\item In \cite{LHY20} and this article, we considered the partial interior data and required the observation subdomain to surround the whole boundary. Similarly, one can deal with the case of full lateral Cauchy data and investigate the same problem both theoretically and numerically.
\item On the direction of Subsection \ref{sec-convection}, it would be challenging to develop an elliptic approach to boundary value problems for convection equations with variable coefficients, i.e., $\bm r=\bm r(x)$ in \eqref{eq-convection}. It is conjectured that if the flows determined by $\bm r$ do not intersect each other and possess ergodicity, then we can reduce \eqref{eq-convection} into a series of second order ordinary differential equations on flows. In such a way, the convection equation can be related with elliptic equations with Laplace-Beltrami operators on Riemannian manifolds.
\end{enumerate}
\bigskip

{\bf Acknowledgement}\ \ The author appreciates the valuable discussions with Gen Nakamura (Hokkaido University), Masahiro Yamamoto (The University of Tokyo) and Guanghui Hu (Beijing Computational Science Research Center).


\bigskip

{\small\noindent Research Institute for Electronic Science\\
Hokkaido University\\
N12W7, Kita-Ward, Sapporo 060-0812, JAPAN\\
E-mail address: ykliu@es.hokudai.ac.jp}

\end{document}